\newcommand{\medcup}{\mathbin{\scalebox{1.5}{\ensuremath{\cup}}}}
\DeclareMathOperator{\dif}{\text{\normalfont d}}
\DeclareMathOperator{\Wron}{Wron}
\DeclareMathOperator{\Const}{Const.}
\DeclareMathOperator{\sing}{sing}
\DeclareMathOperator{\bs}{Bs}
\def\log{\mathrm{log}\,}
\theoremstyle{plain}
\newtheorem{thm}{Theorem}[section]  
\newtheorem{cor}[thm]{{Corollary}}
\newtheorem{pro}[thm]{Proposition}
\newtheorem{defi}[thm]{Definition}
\theoremstyle{remark}
\numberwithin{equation}{section}
\theoremstyle{plain}
\newcommand{\thistheoremname}{}
\newtheorem*{genericthm*}{\thistheoremname}
\newenvironment{namedthm*}[1]{\renewcommand{\thistheoremname}{#1}%
	\begin{genericthm*}}
	{\end{genericthm*}}
\newtheoremstyle{named}{}{}{\itshape}{}{\bfseries}{.}{.5em}{\thmnote{#3's }#1}
\theoremstyle{named}
\newcommand\thankssymb[1]{\textsuperscript{\@fnsymbol{#1}}}
\begin{document} 
	\title[On the Gauss maps of complete minimal surfaces]{\bf On the Gauss maps\\
		of complete minimal surfaces in $\mathbb{R}^n$}

	\subjclass[2020]{53A10, 32H30}
	\keywords{value distribution theory, Gauss map, minimal surface, hyperbolicity}
	
	\author{Dinh Tuan Huynh}
	
	\address{Department of Mathematics, University of Education, Hue University, 34 Le Loi St., Hue City, Vietnam}
	\email{dinhtuanhuynh@hueuni.edu.vn}
\dedicatory{Delicated to Professor Doan The Hieu}
\begin{abstract}
	We prove that the Gauss map of a non-flat complete  minimal surface immersed in $\mathbb{R}^n$ can omit a generic hypersurface $D$ of degree at most $
	n^{n+2}(n+1)^{n+2}$.
\end{abstract}

\maketitle

\section{Introduction}

Let $f = (x_1, x_2,\dots, x_n) \colon M\rightarrow \mathbb{R}^n$ be an oriented surface immersed in $\mathbb{R}^n$. Using systems of isothermal coordinates $(x,y)$, one can consider $M$ as a Riemann
surface. We are interested in the class of minimal surfaces, namely, those which
have minimal areas for all small perturbations. It is a well-known fact that if $M$ is minimal, then its Gauss map $g\colon M\rightarrow\mathbb{C}\mathbb{P}^{n-1}$, defined as

$$
g(z):=\bigg[\dfrac{\partial f}{
\partial z}\bigg],
$$
where $z=x+iy$ is a holomophic chart on $M$, is a holomorphic map.

In the  particular case where $n=3$, by identifying the unit sphere with the complex projective line via the stereographic projection, one can view the Gauss map $g$ as a map of $M$ into $\mathbb{C}\mathbb{P}^1$. Osserman \cite{Osserman64} proved that if $M$ is a non-flat complete minimal surface
	immersed in $\mathbb{R}^3$, then the complement of the image of its Gauss map is	of logarithmic capacity zero in $\mathbb{C}\mathbb{P}^1$. This interesting result  could be regarded as a significant improvement of the classical Bernstein's Theorem. Strengthening this result,  Xavier \cite{Xavier81} proved that in this situation, the Gauss map of $M$ can avoid at most $6$ points. Sharp result was obtained by Fujimoto  \cite{Fujimoto88}, where he proved that indeed, the Gauss map of $M$ can avoid at most $4$ points.

Passing to higher dimensional case, first step was made by Fujimoto \cite{Fujimoto83}, where the intersection between the Gauss maps  of a complete minimal surface immersed in $\mathbb{R}^n$ and family of hyperplanes in $\mathbb{C}\mathbb{P}^{n-1}$ was considered. Precisely, Fujimoto established the following
\begin{thm}
	\label{first high dim result of Fuj}
If the Gauss map of a non-flat complete minimal surface in $\mathbb{R}^n$ is non-degenerate, it can omit at most $q=n^2$ hyperplanes in $\mathbb{C}\mathbb{P}^{n-1}$ in general position.
\end{thm}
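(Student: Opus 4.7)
The plan is to argue by contradiction. Assume that the non-degenerate Gauss map $g \colon M \to \mathbb{CP}^{n-1}$ omits $q = n^{2}+1$ hyperplanes $H_{1}, \dots, H_{q}$ in general position, defined by linear forms $\ell_{1}, \dots, \ell_{q}$, and derive a contradiction between the completeness of the induced metric on $M$ and a negatively curved auxiliary pseudo-metric built from $g$, via the Ahlfors--Schwarz lemma.

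The first step is to pass to the universal cover $\widetilde{M}$, which by uniformization is biholomorphic to either $\mathbb{C}$ or the unit disk $\Delta$. The Weierstrass representation lifts to a reduced holomorphic map $F = (F_{1}, \dots, F_{n}) \colon \widetilde{M} \to \mathbb{C}^{n} \setminus \{0\}$ with $F_{1}^{2} + \cdots + F_{n}^{2} \equiv 0$; the induced metric becomes $ds^{2} = 2|F|^{2}|dz|^{2}$, and its completeness on $M$ lifts to completeness on $\widetilde{M}$. Non-degeneracy of $g$ means that $F$ is linearly full, so the Wronskian $W = W(F_{1}, \dots, F_{n})$ does not vanish identically, and the functions $\ell_{j}(F)$ are nowhere zero by the omission hypothesis.

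The heart of the argument is the construction, for carefully chosen positive exponents $\alpha$ and $p$, of an auxiliary pseudo-metric of the shape
$$
d\tau^{2} \;=\; |F|^{2(1-p)} \left(\frac{|W(F_1, \dots, F_n)|}{\prod_{j=1}^{q} |\ell_{j}(F)|}\right)^{\!2\alpha} |dz|^{2}.
$$
Using the Poincar\'e--Lelong formula, the zero-freeness of each $\ell_{j}(F)$, and the Pl\"ucker-type identities relating $|W(F)|$ to the associated curves of $g$, a direct computation of $\partial\bar\partial \log d\tau^{2}$ shows that---provided the exponents can be chosen so that $q\alpha$ exceeds a certain linear combination of $\alpha$, $p$, and $n$---the Gaussian curvature of $d\tau^{2}$ is bounded above by a strictly negative constant. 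The admissible region for $(\alpha, p)$ is nonempty precisely when $q > n^{2}$.

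Ahlfors--Schwarz then dominates $d\tau$ by a constant multiple of the Poincar\'e metric in the $\Delta$-case (and in the $\mathbb{C}$-case one gets $d\tau \equiv 0$, forcing $g$ to degenerate, a contradiction already). Solving the resulting inequality for $|F|$ and integrating $|F|\,|dz|$ along a divergent curve in $\widetilde{M}$ produces a finite $ds$-length for that curve, contradicting completeness. The main obstacle will be the curvature estimate: one must tune $(\alpha, p)$ so that the $q$ positive contributions coming from $-\partial\bar\partial \log|\ell_{j}(F)|^{-2\alpha}$ strictly dominate the (sign-indefinite) contribution of $|W(F)|^{2\alpha}$, which via Pl\"ucker brings in an $n(n-1)/2$-factor, while keeping the free $|F|^{2(1-p)}$-factor compatible with the completeness argument. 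Quantifying this two-parameter trade-off, and exploiting the general-position assumption on $H_{1}, \dots, H_{q}$ to avoid unwanted common zeros, is what produces exactly the sharp threshold $q = n^{2}$.
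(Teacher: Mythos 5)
Your route is genuinely different from the paper's. The paper does not reprove Theorem~\ref{first high dim result of Fuj} by negative curvature: it cites Fujimoto and then, in the last section, recovers the statement through the jet machinery of the Main Theorem, by observing that $\omega=\mathrm{Wron}(\dif x_1,\dots,\dif x_n)/(F_1\cdots F_q)$ is a logarithmic jet differential of weight $m=n(n+1)/2$ vanishing to order $\widetilde m=q-(n+1)$ at infinity, and then running the chain \textquotedblleft non-transcendence of $f$ $\Rightarrow$ Fujimoto's integral estimate (Corollary~\ref{application of logarithmic derivative lemma}, Proposition~\ref{estimate jet and norm f}) $\Rightarrow$ contradiction with Yau's theorem\textquotedblright, which works exactly when $\widetilde m>2m$; translated to $\mathbb{C}\mathbb{P}^{n-1}$ this reads $q-n>n(n-1)$, i.e.\ $q>n^2$. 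Your Ahlfors--Schwarz scheme is the method of Fujimoto's later papers and, carried out with the full flag of associated curves, actually yields the sharper bound $n(n+1)/2$; calling $n^2$ the \textquotedblleft sharp threshold\textquotedblright\ of your trade-off already signals that the computation has not been done.

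The genuine gap is the curvature estimate, which you flag as the main obstacle but do not resolve, and which fails for the metric as displayed. Each $\ell_j(F)$ is nowhere zero, so $\log|\ell_j(F)|$ is harmonic, and $\log|W|$ is harmonic off the discrete set $\{W=0\}$; hence away from that set
\[
dd^c\log\Big(\|F\|^{2(1-p)}\big(|W|/\textstyle\prod_{j}|\ell_j(F)|\big)^{2\alpha}\Big)
=(1-p)\,dd^c\log\|F\|^2 ,
\]
so neither $q$ nor $\alpha$ enters the curvature form at all: the curvature equals $-2(1-p)\lambda^{-1}\,|F\wedge F'|^2/\|F\|^4$, which is nonpositive but degenerates at the stationary points of the associated curves of $g$, and no choice of $(\alpha,p)$ can produce the claimed mechanism where \textquotedblleft the $q$ positive contributions from $-\partial\bar\partial\log|\ell_j(F)|^{-2\alpha}$ dominate\textquotedblright\ --- those contributions are identically zero. (Poincar\'e--Lelong gives equalities of currents supported on zero sets, not the pointwise negative upper bound that Ahlfors--Schwarz requires.) To make $q$ appear one must either insert the logarithmic factors $\big(\log(\delta/\phi_k(H_j))\big)^{-2\alpha_{jk}}$ built from the contact functions of \emph{all} associated curves $F_0,\dots,F_{n-1}$ and track vanishing orders via the Pl\"ucker formulas (Fujimoto's actual negative-curvature proof), or abandon curvature and let $q$ enter through growth estimates as the paper does, where the threshold is precisely the bookkeeping $\widetilde m>2m$, i.e.\ $q>2\cdot\tfrac{n(n-1)}{2}+n=n^2$. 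As written, your central step would not go through.
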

Later, Fujimoto himself  \cite{Fujimoto90} decreased the number of hyperplanes in the above statement to  $q=\dfrac{n(n+1)}{2}$ and it turns out that this number is sharp. Ru \cite{Ru91} was able to remove the non-degenerate assumption of the Gauss map in Fujimoto's result.
Since then, by adapting tools and techniques from value distribution theory of holomorphic curves to study Gauss maps, many generalizations of the above works of Fujimoto-Ru were made. Note that in these results, it is required the presence of many hypersurfaces.

In this paper, based on recent progresses towards the hyperbolicity problem \cite{Siu2015, Demailly2012, Demailly2020, DMR2010, Darondeau2016, HVX2019, Berczi2019, Brotbek-Deng2019}, we consider the case when there is only one hypersurface of  high enough degree.

\begin{namedthm*}{Main Theorem}
	Let $M$ be a non-flat complete minimal surface immersed in $\mathbb{R}^n$ and let $G\colon M\rightarrow \mathbb{C}\mathbb{P}^{n-1}$ be its Gauss map. Then $G$ can avoid a generic hypersurface $D\subset \mathbb{C}\mathbb{P}^{n-1}$ of  degree at most 
	\[
		d
	=
	n^{n+2}(n+1)^{n+2}.
	\] 
\end{namedthm*}
\section*{Acknowledgement}
This work is supported by the Vietnam Ministry of Education and Training under the grant number B2024-DHH-14. I would like to thank Prof. Doan The Hieu for his encouragements.  I want to thank Song-Yan Xie for helpful suggestions which improved the exposition.
\section{Logarithmic jet differentials}

Let $X$ be a complex projective variety of dimension $n$. For a point $x\in X$, consider the holomorphic germs $(\mathbb{C},0)\rightarrow (X,x)$. Two such germs are said to be equivalent if they have the same Taylor expansion up to order $k$ in some local coordinates around $x$. The equivalence class of an analytic germ $f\colon (\mathbb{C},0)\rightarrow (X,x)$ is called the {\sl $k$-jet of $f$}, denoted by $j_k(f)$, which is independent of the choice of local coordinates. A $k$-jet $j_k(f)$ is said to be {\sl regular} if $\dif f(0)\not=0$. For a given point $x\in X$, denote by $j_k(X)_x$ the vector space of all $k$-jets of analytic germs $(\mathbb{C},0)\rightarrow (X,x)$, set
\[
J_k(X)
:=
\underset{x\in X}{\medcup}\,J_k(X)_x,
\]
and consider the natural projection
\[
\pi_k\colon J_k(X)\rightarrow X.
\]
Then $J_k(X)$ carries the structure of a holomorphic fiber bundle over $X$, which is called the {\sl $k$-jet bundle over $X$}. Note that in general, $J_k(X)$ is not a vector bundle. When $k=1$, the $1$-jet bundle $J_1(X)$ is canonically isomorphic to the tangent bundle $T_X$ of $X$.

For an open subset $U\subset X$, for a section $\omega\in H^0(U,T_X^*)$, for a $k$-jet $j_k(f)\in J_k(X)|_U$, the pullback $f^*\omega$ is of the form $A(z)\dif z$ for some analytic function $A$, where $z$ is the global coordinate of $\mathbb{C}$. Since each derivative $A^{(j)}$ ($0\leq j\leq k-1$) is well-defined, independent of the representation of $f$ in the class $j_k(f)$, the analytic $1$-form $\omega$ induces the holomorphic map
\begin{equation}
\label{trivialization-jet}
\tilde{\omega}
\colon
J_k(X)|_U
\rightarrow
\mathbb{C}^k;\,\,j_k(f)\rightarrow
\big(A(z),A(z)^{(1)},\dots,A(z)^{(k-1)}
\big).
\end{equation}
Hence on an open subset $U$, a given local holomorphic coframe $\omega_1\wedge\dots\wedge\omega_n\not=0$ yields a trivialization 
\[
H^0(U, J_k(X))\rightarrow U\times(\mathbb{C}^k)^n
\]
by providing the following new $n k$ independent coordinates:
\[
\sigma\rightarrow(\pi_k\circ\sigma;\tilde{\omega}_1\circ\sigma,\dots,\tilde{\omega}_n\circ\sigma),
\]
where $\tilde{\omega}_i$ are defined as in \eqref{trivialization-jet}. The components $x_i^{(j)}$ ($1\leq i\leq n$, $1\leq j\leq k$) of $\tilde{\omega}_i\circ\sigma$ are called the {\sl jet-coordinates}.
In a more general setting, where $\omega$ is a section over $U$ of the sheaf of meromorphic $1$-forms, the induced map $\tilde{\omega}$ is meromorphic.

Now, suppose that $D\subset X$ is a normal crossing divisor on $X$. This means that at each point $x\in X$, there exist some local coordinates $z_1,\dots,z_{\ell},z_{\ell+1},\dots,z_n$ ($\ell=\ell(x)$) centered at $x$ in which $D$ is defined by
\[
D=
\{z_1\dots z_{\ell}=0
\}.
\]
Following Iitaka \cite{Iitaka1982}, the {\sl logarithmic cotangent bundle of $X$ along $D$}, denoted by $T_X^*(\log D)$, corresponds to the locally free sheaf generated by
\[
\dfrac{\dif\!z_1}{z_1},\dots,\dfrac{\dif\! z_{\ell}}{z_{\ell}},z_{\ell +1},\dots,z_n
\]
in the above local coordinates around $x$.

A holomorphic section $s\in H^0(U,J_k(X))$ over an open subset $U\subset X$ is said to be a {\sl logarithmic $k$-jet field} if $\tilde{\omega}\circ s$ are analytic for all sections $\omega\in H^0(U',T_X^*(\log D))$, for all open subsets $U'\subset U$, where $\tilde{\omega}$ are induced maps defined as in \eqref{trivialization-jet}. Such logarithmic $k$-jet fields define a
subsheaf of $J_k(X)$, and this subsheaf is itself a sheaf of sections of a holomorphic fiber bundle over $X$, called the {\sl logarithmic $k$-jet bundle over $X$ along $D$}, denoted by $J_k(X,-\log D)$ (see \cite{Noguchi1986}).

The group $\mathbb{C}^*$ admits a natural fiberwise action defined as follows. For local coordinates 
\[
z_1,\dots,z_{\ell},z_{\ell+1},\dots,z_n\eqno\scriptstyle{(\ell=\ell(x))}
\]
centered at $x$ in which $D=\{z_1\dots z_{\ell}=0\}$, for any logarithmic $k$-jet field along $D$ represented by some germ $f=(f_1,\dots,f_n)$, if $\varphi_{\lambda}(z)=\lambda z$ is the homothety with ratio $\lambda\in \mathbb{C}^*$, the action is given by
\[
\begin{cases}
\big(\log(f_i\circ \varphi_{\lambda})\big)^{(j)}
=
\lambda^j
\big(\log f_i\big)^{(j)}\circ\varphi_{\lambda} &
\quad
\scriptstyle{(1\,\leq\,i\,\leq\,\ell),}
\\
\big(f_i\circ \varphi_{\lambda}\big)^{(j)}
\quad\quad\,=
\lambda^j f_i^{(j)}\circ\varphi_{\lambda}
&
\quad
\scriptstyle{(\ell+1\,\leq\,i\,\leq\,n).}
\end{cases}
\]

A {\sl logarithmic jet differential of {\sl order} $k$ and {\sl degree}} $m$ at a point $x\in X$ is a polynomial $Q(f^{(1)},\dots,f^{(k)})$ on the fiber over $x$ of $J_k(X,-\log D)$ enjoying weighted homogeneity:
\[
Q(j_k(f\circ\varphi_{\lambda}))
=
\lambda^m
Q(j_k(f))
\eqno\scriptstyle{(\lambda\,\in\,\mathbb{C}^*)}.
\]

Consider the symbols
\[
\dif^{j}\log z_i\eqno\scriptstyle{(1\,\leq\, j\,\leq\, k,\,1\,\leq\, i\,\leq\,\ell)}
\]
and
\[
\dif^{j}z_i\eqno\scriptstyle{(1\,\leq\, j\,\leq\, k,\,\ell\,+\,1\,\leq\, i\,\leq\,n)}.
\]
Set the weight of $\dif^{j}\log z_i$ or $\dif^{j}z_i$ to be $j$. Then
a logarithmic jet differential of order $k$ and weight $k$ along $D$ at $x$ is a weighted homogeneous polynomial of degree $m$ whose variables are these symbols. Denote by $E_{k,m}^{GG}T_X^*(\log D)_x$ be the vector space spanned by such polynomials and set
\[
E_{k,m}^{GG}T_X^*(\log D)
:=
\underset{x\in X}{\medcup}\,
E_{k,m}^{GG}T_X^*(\log D)_x.
\]
By Fa\`{a} di bruno's formula \cite{Constantine1996,Merker2015}, one can check that $E_{k,m}^{GG}T_X^*(\log D)$ carries the structure of a vector bundle over $X$, called {\sl logarithmic Green-Griffiths vector bundle} \cite{Green-Griffiths1980}. A global section of $E_{k,m}^{GG}T_X^*(\log D)$ is called a {\sl logarithmic jet differential} of order $k$ and weight $m$ along $D$. Locally, a logarithmic jet differential form can be written as

{\footnotesize
	\begin{equation}
	\label{local expression of log jet}
	\underset{|\alpha_1|+2|\alpha_2|+\dots+k|\alpha_k|=m}
	{\sum_{\alpha_1,\dots,\alpha_k\in\mathbb{N}^n}}
	A_{\alpha_1,\dots,\alpha_k}
	\bigg(
	\prod_{i=1}^{\ell}
	\big(
	\dif\log z_i
	\big)^{\alpha_{1,i}}
	\prod_{i=\ell+1}^{n}
	\big(
	\dif z_i
	\big)
	^{\alpha_{1,i}}
	\bigg)
	\dots
	\bigg(
	\prod_{i=1}^{\ell}
	\big(
	\dif^k\log z_i
	\big)^{\alpha_{k,i}}
	\prod_{i=\ell+1}^{n}
	\big(
	\dif^kz_i
	\big)
	^{\alpha_{k,i}}
	\bigg),
	\end{equation}
}
where
\[
\alpha_{\lambda}
=
(\alpha_{\lambda,1},\dots,\alpha_{\lambda,n})
\in\mathbb{N}^n
\eqno
\scriptstyle{(1\,\leq\,\lambda\,\leq\, k)}
\]
are multi-indices of length 
\[
|\alpha_{\lambda}|
=
\sum_{1\leq i\leq n}
\alpha_{\lambda,i},
\]
and where $A_{\alpha_1,\dots,\alpha_k}$ are locally defined holomorphic functions. 

Assigning the weight $s$ for $\frac{\dif^s z_i}{z_i}$, then one can rewritten $\dif^j\log z_i$ as an isobaric polynomial of weight $j$ of variables $\frac{\dif^s z_i}{z_i}\, (1\leq s\leq j)$ with integer coefficients, namely

$$
\dif^j\log z_i
=
\underset{\beta_1+2\beta_2+\dots+j\beta_j=j}
{\sum_{\beta=(\beta_1,\dots,\beta_j)\in\mathbb{N}^j}}
b_{j\beta}\bigg(\dfrac{\dif z_i}{z_i}\bigg)^{\beta_1}\dots\bigg(\dfrac{\dif^j z_i}{z_i}\bigg)^{\beta_j},
$$
where $b_{j\beta}\in\mathbb{Z}$. Conversely, one can also express $\frac{\dif^j z_i}{z_i}$ as an isobaric polynomial of weight $j$ of variables $\dif^s\log z_i$ ($1\leq s\leq j$) with integer coefficients \cite{Brotbek-Deng2019}. Thus one can also use the following trivialization of logarithmic jet differentials:

{\footnotesize
	\begin{equation}
	\label{local expression of log jet, second form}
	\underset{|\beta_1|+2|\beta_2|+\dots+k|\beta_k|=m}
	{\sum_{\beta_1,\dots,\beta_k\in\mathbb{N}^n}}
	B_{\beta_1,\dots,\beta_k}
	\bigg(
	\prod_{i=1}^{\ell}
	\big(
	\frac{\dif z_i}{z_i}\big)^{\beta_{1,i}}
	\prod_{i=\ell+1}^{n}
	\big(
	\dif z_i
	\big)
	^{\beta_{1,i}}
	\bigg)
	\dots
	\bigg(
	\prod_{i=1}^{\ell}
	\big(\dfrac{
	\dif^k z_i}{z_i}
	\big)^{\beta_{k,i}}
	\prod_{i=\ell+1}^{n}
	\big(
	\dif^kz_i
	\big)
	^{\beta_{k,i}}
	\bigg),
	\end{equation}
}
where
\[
\beta_{\lambda}
=
(\beta_{\lambda,1},\dots,\beta_{\lambda,n})
\in\mathbb{N}^n
\eqno
\scriptstyle{(1\,\leq\,\lambda\,\leq\, k)}
\]
are multi-indices of length 
\[
|\beta_{\lambda}|
=
\sum_{1\leq i\leq n}
\beta_{\lambda,i},
\]
and where $B_{\beta_1,\dots,\beta_k}$ are locally defined holomorphic functions. 

Demailly \cite{Dem97} refined the Green-Griffiths' theory and considered the sub-bundle $E_{k,m}T^*_X(\log D)$ of $E_{k,m}^{GG}T^*_X(\log D)$, whose sections are logarithmic jet differentials that are invariant under arbitrary reparametrization of the source $\mathbb{C}$. Let \[
(X,D,V)
\]
be a  {\sl log--direct manifold}, i.e.,  a triple consisting of a projective manifold $X$, a simple normal crossing divisor $D$ on $X$ and  a holomorphic sub-bundle $V$ of the logarithmic tangent bundle $T_X(-\log D)$. Starting with a log--direct manifold $(X_0,D_0,V_0):=(X,D,T_X(-\log D))$, one then defines $X_1:=\mathbb{P}(V_0)$ together with the natural projection $\pi_1:X_1\rightarrow X_0$. Setting $D_1:=\pi_1^*D_0$, so that $\pi_1$ becomes a log--morphism, and defines the sub-bundle $V_1\subset T_{X_1}(-\log D_1)$  as
\[
V_{1,(x,[v])}:=\{\xi\in T_{X_1,(x,[v])}(-\log D_1):\pi_*\xi\in \mathbb{C} \cdot v\},
\]
one obtains the log-direct manifold $(X_1,D_1,V_1)$ from the initial one.
Any germ of a holomorphic map $f:(\mathbb{C},0)\rightarrow (X\setminus D,x)$ can be lifted to $f^{[1]}: \mathbb{C}\rightarrow X_1\setminus D_1$. Inductively, one can construct on $X=X_0$ the {\sl Demailly-Semple tower}\,:
\[
(X_k,D_k,V_k)\rightarrow\dots \rightarrow(X_1,D_1,V_1)\rightarrow(X_0,D_0,V_0),
\] 
together with the projections $\pi_k:X_k\rightarrow X_0$. Denote by $\mathcal{O}_{X_k}(1)$ the tautological line bundle on $X_k$. Then the direct image $(\pi_k)_*\mathcal{O}_{X_k}(m)$ of $\mathcal{O}_{X_k}(m)=\mathcal{O}_{X_k}(1)^{\otimes m}$, denoted by $E_{k,m}T_X^*(\log D)$, is a locally free subsheaf of $E_{k,m}^{GG}T_X^*(\log D)$ generated by all polynomial operators in the derivatives up to order $k$, which are furthermore invariant under any change of parametrization $(\mathbb{C},0)\rightarrow (\mathbb{C},0)$. From the construction, one can immediately check the following
\begin{namedthm*}{Direct image formula}
	For any ample line bundle $\mathcal{A}$ on $X$, one has
	\begin{equation}
	\label{direct image formula}
	H^0\big(X,E_{k,m}T_X^*(\log D)\otimes\mathcal{A}^{-1}\big)\cong
	H^0\big(X_k,\mathcal{O}_{X_k}(m)\otimes\pi_k^*\mathcal{A}^{-1}\big).
	\end{equation}	
\end{namedthm*}

The  bundles $E_{k,m}^{GG}T_X^*(\log D)$, $E_{k,m}T_X^*(\log D)$ are fundamental tools in studying the degeneracy of holomorphic curves into $\mathbb{C}\setminus D$. By the fundamental vanishing theorem of entire curves \cite{Dem97, Siu2015}, for any ample line bundle $\mathcal{A}$ on $X$, a non-trivial global section of $E_{k,m}^{GG}T_X^*(\log D)\otimes\mathcal{A}^{-1}$ gives a corresponding algebraic differential equation that all entire holomorphic function $f\colon\mathbb{C}\rightarrow X\setminus D$ must satisfy. The existence of these sections was proved recently \cite{Merker2015, Demailly2012}, provided that the order of jet is high enough. However, despite many efforts, the problem of controlling the base locus of these bundles can be only handled under the condition that the degree of $D$ must be very large compared with the dimension of the variety \cite{DMR2010, Berczi2019, Demailly2020, Siu2015, Brotbek-Deng2019}.

Now we consider the case where $D$ is a generic hypersurface of degree $d$ in $\mathbb{C}\mathbb{P}^n$. To guarantee the existence of logarithmic jet differentials along $D$, we consider the order jet $k=n+1$ and put
$$
k'=\dfrac{k(k+1)}{2},\qquad \delta=(k+1)n+k.
$$
Fixing two positive integers  $\epsilon>0$ and $r>\delta^{k-1}k(\epsilon+k\delta)$. For a smooth hypersurface $D$, denote by $Y_k(D)$ the log-Demailly-Semple  $k$-jet tower associated to $\big(\mathbb{C}\mathbb{P}^n,D,T_{\mathbb{C}\mathbb{P}^n}(-\log D)\big)$. For a line bundle $L$ on $\mathcal{O}_{Y_k(D)}$, denote by $\bs\big(\mathcal{O}_{Y_k(D)}L\big)$ the base locus of the line bundle $L$. We will employ the following key result in \cite{Brotbek-Deng2019}.
\begin{pro}(\cite[Cor. 4.5]{Brotbek-Deng2019})
	\label{base locus bd}
There exist $\beta,\widetilde{\beta}\in\mathbb{N}$ such that for any $\alpha\geq 0$ and for any generic hypersurface $D\in\big|\mathcal{O}_{\mathbb{C}\mathbb{P}^n(1)}^{\epsilon+(r+k)\delta}\big|$, one has
$$
\bs\big(\mathcal{O}_{Y_k(D)}(\beta+\alpha\delta^{k-1}k')
\otimes
\pi_{0,k}^*
\mathcal{O}_{\mathbb{C}\mathbb{P}^n(1)}^{\widetilde{\beta}+\alpha(\delta^{k-1}k(\epsilon+k\delta)-r)}
\big)
\subset
Y_k(D)^{\sing}\cup\pi_{0,k}^{-1}(D).
$$
\end{pro}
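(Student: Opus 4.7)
The plan is to follow the deformation-and-Wronskian strategy of \cite{Brotbek-Deng2019} adapted to the logarithmic setting on $\mathbb{C}\mathbb{P}^n$. The first step is a semi-continuity reduction. Over the open subset $S \subset \bigl|\mathcal{O}_{\mathbb{C}\mathbb{P}^n}(\epsilon+(r+k)\delta)\bigr|$ parametrizing smooth hypersurfaces, assemble the relative log Demailly-Semple tower $\rho:\mathcal{Y}_k \to S$ together with the natural projection $\pi:\mathcal{Y}_k \to \mathbb{C}\mathbb{P}^n$, which restricts fiberwise to $\pi_{0,k}$. Consider the relative line bundles
$$
\mathcal{L}_\alpha := \mathcal{O}_{\mathcal{Y}_k}(\beta+\alpha\delta^{k-1}k') \otimes \pi^*\mathcal{O}_{\mathbb{C}\mathbb{P}^n}\bigl(\widetilde{\beta}+\alpha(\delta^{k-1}k(\epsilon+k\delta)-r)\bigr).
$$
Because base loci are upper semi-continuous in flat families, it suffices to exhibit \emph{one} smooth hypersurface $D_0 \in S$ for which
$$
\bs\bigl(\mathcal{L}_\alpha|_{Y_k(D_0)}\bigr) \;\subset\; Y_k(D_0)^{\sing}\cup \pi_{0,k}^{-1}(D_0)
$$
for every $\alpha \geq 0$; the containment then propagates to every $D$ in a Zariski-open dense subset of $S$, which realizes the word "generic" in the statement.

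For such $D_0$ I would take a Brotbek-Deng special hypersurface cut out by $F=\sum_{|I|=\epsilon} a_I \tau_I^{r+k}$, where the $\tau_I$ are suitably generic sections of $\mathcal{O}_{\mathbb{C}\mathbb{P}^n}(\delta)$ depending on $I$. On $\mathbb{C}\mathbb{P}^n\setminus D_0$ the meromorphic functions $g_I := \tau_I^{r+k}/F$ carry logarithmic poles along $D_0$. For every $(k+1)$-tuple $(I_0,\dots,I_k)$ form the logarithmic Wronskian
$$
W_{I_0,\dots,I_k} := \det\bigl[\dif^{\lambda}\log g_{I_j}\bigr]_{0\leq j,\lambda\leq k}.
$$
Assigning weight $\lambda$ to $\dif^{\lambda}\log$, a direct degree count shows $W_{I_0,\dots,I_k}$ defines a section of $\mathcal{O}_{Y_k(D_0)}(k') \otimes \pi_{0,k}^*\mathcal{O}_{\mathbb{C}\mathbb{P}^n}(-r+c)$ for a fixed constant $c$ independent of $\alpha$. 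Taking symmetric products $\delta^{k-1}$-fold (to decouple pole orders along $D_0$ in accordance with the weight $\delta^{k-1}k'$ appearing in the statement), then raising to the $\alpha$-th power, and finally multiplying by auxiliary sections of $\pi_{0,k}^*\mathcal{O}_{\mathbb{C}\mathbb{P}^n}(1)$ to absorb the positive correction, produces sections of $\mathcal{L}_\alpha|_{Y_k(D_0)}$ for all $\alpha\geq 0$; the universal constants $\beta$ and $\widetilde{\beta}$ are precisely these fixed correction terms.

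The main obstacle is the final step: proving that the joint vanishing locus of the family $\{W_{I_0,\dots,I_k}\}$, viewed as sections of the corresponding bundle on $Y_k(D_0)$, is contained in $Y_k(D_0)^{\sing}\cup \pi_{0,k}^{-1}(D_0)$. Concretely, for any regular germ $f:(\mathbb{C},0)\to \mathbb{C}\mathbb{P}^n\setminus D_0$ with $\dif f(0)\neq 0$, one must produce at least one tuple $(I_0,\dots,I_k)$ such that the matrix $\bigl[(\dif^{\lambda}\log g_{I_j})(f)\bigr]_{0\leq j,\lambda\leq k}$ has maximal rank at $z=0$. This rank statement is the technical core of \cite{Brotbek-Deng2019}: the explicit sum decomposition of $F$ is arranged so that in coordinates adapted to $D_0$, a well-chosen tuple reduces the logarithmic Wronskian, up to a nonvanishing holomorphic factor, to the classical Wronskian of $k+1$ linearly independent holomorphic functions of $f$, which is nonzero. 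Combined with the $\alpha$-power iteration, this yields the containment on $Y_k(D_0)$, and the proposition follows by semi-continuity.
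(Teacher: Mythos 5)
The paper does not actually prove this proposition: it is imported verbatim from \cite[Cor.~4.5]{Brotbek-Deng2019} and used as a black box, so there is no internal proof to compare your sketch against. Judged as a reconstruction of the Brotbek-Deng argument, your outline captures the right architecture (Fermat-type special hypersurfaces, logarithmic Wronskians of the $g_I=\tau_I^{r+k}/F$, powers and products to reach the stated weights, and a spreading-out step), but two of its load-bearing steps would fail as written.

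First, the reduction to a single special hypersurface $D_0$ via ``upper semi-continuity of base loci in flat families'' is not a valid principle: for a line bundle $\mathcal{L}$ on a flat projective family $\mathcal{Y}\to S$, sections of $\mathcal{L}|_{Y_{s_0}}$ on a special fiber need not deform to nearby fibers, so controlling $\bs\big(\mathcal{L}|_{Y_{s_0}}\big)$ at one point does not by itself control $\bs\big(\mathcal{L}|_{Y_{s}}\big)$ for generic $s$. What Brotbek-Deng actually do is construct the Wronskian sections \emph{over the whole parameter space} of coefficients $(a_I)$ and of the $\tau_I$, so that they restrict to every fiber simultaneously, and they then invoke a separate, carefully stated openness lemma for base loci in families to pass from the special family to a generic member of the full linear system; this lemma is a genuine ingredient, not a formal consequence of flatness. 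Second, your proposed technical core---that for every regular germ some tuple makes the logarithmic Wronskian matrix of maximal rank because it reduces to ``the classical Wronskian of $k+1$ linearly independent functions, which is nonzero''---is false as stated: linear independence of functions does not force their Wronskian along a fixed regular jet to be nonzero at the origin. The actual core of the reference is a positivity statement (proved by a Nakai-Moishezon-type argument together with a dimension count exploiting the genericity of the coefficients) for a tautological line bundle on a universal complete-intersection variety sitting inside a product of a projective space and a Grassmannian; the relative jet tower maps to that universal object via the Wronskians away from $Y_k(D)^{\sing}\cup\pi_{0,k}^{-1}(D)$, and the base-locus containment is pulled back from there rather than verified jet by jet. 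To turn your sketch into a proof you would need to reorganize it around these two points: family-level sections plus the openness lemma, and the universal positivity statement in place of the pointwise rank claim.
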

Using this result, Brotbek-Deng confirmed the logarithmic Kobayashi conjecture in the case where the degree of $D$ is large enough. We extract from their proof the following
\begin{thm} \label{the_existence_jet}
	\label{construction-jet-differential} 
	Let $D\subset\mathbb{P}^n(\mathbb{C})$ be a generic smooth hypersurface in $\mathbb{P}^n(\mathbb{C})$ having large enough degree
	\[
	d
	\geq
(n+1)^{n+3}(n+2)^{n+3}.
	\]
	Let $f\colon\Delta\rightarrow \mathbb{C}\mathbb{P}^n$ be a non-constant  holomorphic disk. If $f(\Delta)\not\subset D$, then for jet order $k=n+1$, there exist some weighted degree $m$, vanishing order $\widetilde{m}$ with $\widetilde{m}>2 m$ and some global logarithmic jet differential
	\[
	\mathscr{P}
	\in
	H^0
	\big(
	\mathbb{C}\mathbb{P}^n,
	E_{k,m}^{GG}T_{\mathbb{C}\mathbb{P}^n}^*(\log D)
	\otimes
	\mathcal{O}_{\mathbb{C}\mathbb{P}^n}(1)^{-\widetilde{m}}
	\big)
	\]
	such that
	\begin{align}
	\label{eq_canPfjets}
	\mathscr{P}\big(j_k(f)
	\big)
	\not\equiv
	0.
	\end{align}
\end{thm}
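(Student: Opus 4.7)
The plan is to obtain Theorem~\ref{the_existence_jet} as a direct consequence of Proposition~\ref{base locus bd} together with the Direct Image Formula~\eqref{direct image formula}. First I would fix $k = n+1$, which gives $\delta = n^{2} + 3n + 1$ and $k' = (n+1)(n+2)/2$. I would then pick $\epsilon = 1$ and choose $r$ to be any integer strictly greater than $\delta^{k-1}k(\epsilon + k\delta) + 2\delta^{k-1}k'$; this is slightly stronger than the hypothesis of Proposition~\ref{base locus bd}, but it will guarantee the vanishing-order estimate $\widetilde m > 2m$ below. A short arithmetic check, using the crude bound $\delta < (n+1)(n+2)$, shows that the resulting degree
\[
d \;=\; \epsilon + (r+k)\delta \;\leq\; (n+1)^{n+3}(n+2)^{n+3},
\]
so the theorem's degree bound is met for a generic smooth $D$ of this degree.

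With these parameters, Proposition~\ref{base locus bd} yields two integers $\beta, \widetilde\beta \in \mathbb{N}$ such that for every $\alpha \geq 0$ the line bundle
\[
L_\alpha \;:=\; \mathcal{O}_{Y_k(D)}(m_\alpha) \,\otimes\, \pi_{0,k}^{*}\mathcal{O}_{\mathbb{C}\mathbb{P}^{n}}(-\widetilde m_\alpha),
\]
with $m_\alpha := \beta + \alpha\delta^{k-1}k'$ and $\widetilde m_\alpha := \alpha\bigl(r - \delta^{k-1}k(\epsilon + k\delta)\bigr) - \widetilde\beta$, has base locus contained in $Y_k(D)^{\sing} \cup \pi_{0,k}^{-1}(D)$. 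For $\alpha$ sufficiently large, the strict inequality $r - \delta^{k-1}k(\epsilon + k\delta) > 2\delta^{k-1}k'$ then forces $\widetilde m_\alpha > 2 m_\alpha > 0$, delivering the quantitative condition demanded by the statement.

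Next, I would consider the canonical $k$-jet lift $f^{[k]} \colon \Delta \to Y_k(D)$ of the disk $f$. The crucial geometric point is that the image $f^{[k]}(\Delta)$ is not entirely contained in $Y_k(D)^{\sing} \cup \pi_{0,k}^{-1}(D)$: the hypothesis $f(\Delta)\not\subset D$ rules out containment in $\pi_{0,k}^{-1}(D)$, and by construction the singular stratum of the log--Demailly--Semple tower corresponds to degenerate jets, which a non-constant holomorphic disk visits only on a discrete subset of its parameter domain. Consequently some global section $s \in H^{0}\bigl(Y_k(D), L_\alpha\bigr)$ satisfies $s\bigl(f^{[k]}(z_0)\bigr) \neq 0$ at some point $z_0 \in \Delta$.

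Finally, applying the Direct Image Formula~\eqref{direct image formula} with $\mathcal{A} = \mathcal{O}_{\mathbb{C}\mathbb{P}^{n}}(\widetilde m_\alpha)$ identifies $s$ with a section
\[
\mathscr{P} \,\in\, H^{0}\bigl(\mathbb{C}\mathbb{P}^{n},\, E_{k,m_\alpha}T^{*}_{\mathbb{C}\mathbb{P}^{n}}(\log D) \otimes \mathcal{O}(-\widetilde m_\alpha)\bigr),
\]
and since $E_{k,m_\alpha}T^{*}_{\mathbb{C}\mathbb{P}^{n}}(\log D) \subset E^{GG}_{k,m_\alpha}T^{*}_{\mathbb{C}\mathbb{P}^{n}}(\log D)$, the same $\mathscr{P}$ lives in the Green--Griffiths bundle appearing in the statement. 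Its evaluation on $j_k(f)$ at $z_0$ equals, up to a non-vanishing trivialization factor, the value $s(f^{[k]}(z_0)) \neq 0$, which yields \eqref{eq_canPfjets} with $m := m_\alpha$ and $\widetilde m := \widetilde m_\alpha$. I expect the main delicate point to be the verification that $f^{[k]}(\Delta)$ escapes the singular stratum $Y_k(D)^{\sing}$; once that is settled, the remainder is essentially a bookkeeping exercise in the parameters supplied by Proposition~\ref{base locus bd}.
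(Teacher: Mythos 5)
Your overall route --- feeding Proposition~\ref{base locus bd} into the direct image formula~\eqref{direct image formula}, lifting $f$ to the tower so as to escape $Y_k(D)^{\sing}\cup\pi_{0,k}^{-1}(D)$, and letting $\alpha\to\infty$ to force $\widetilde m>2m$ --- is exactly the paper's, and those parts are sound (your discussion of why $f^{[k]}(\Delta)$ leaves the singular stratum is in fact more explicit than the paper's). The genuine gap is in the degree bookkeeping. The theorem asserts the existence of $\mathscr{P}$ for a generic hypersurface of \emph{every} degree $d\geq(n+1)^{n+3}(n+2)^{n+3}$, whereas Proposition~\ref{base locus bd} only applies to hypersurfaces whose degree has the special form $d=\epsilon+(r+k)\delta$ with $r$ in the admissible range. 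By fixing $\epsilon=1$ you only reach degrees in a single residue class modulo $\delta$; worse, you then verify $d\leq(n+1)^{n+3}(n+2)^{n+3}$ --- the opposite inequality to the one in the hypothesis --- and declare the degree bound ``met.'' This conflates ``there exists some admissible degree below the threshold'' with ``every degree above the threshold is admissible,'' and the former does not prove the statement (nor does it suffice for the application, where $D$ is a given hypersurface whose degree you do not get to choose).

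The paper closes exactly this gap: it lets $\epsilon$ range over the full interval $k\leq\epsilon\leq k+\delta-1$, which is a complete residue system modulo $\delta$, uses the inequality $k(k+\delta-1+k\delta)<(\delta+1)^2$ to check that every integer $d\geq(r_0+k)\delta+2\delta$ with $r_0=\delta^{k-1}(\delta+1)(\delta+2)$ admits a decomposition $d=\epsilon+(r+k)\delta$ with $r>2\delta^{k-1}k'+\delta^{k-1}k(\epsilon+k\delta)$, and only then bounds this threshold by $(n+1)^{n+3}(n+2)^{n+3}$. You would need to restore this uniformity-in-$d$ step for your argument to prove the theorem as stated.
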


\begin{proof}
	We follow the arguments in \cite[Cor. 4.9]{Brotbek-Deng2019}, with a slightly modification to get higher vanishing order. First, putting $
	r_0
	=
	2\delta^{k-1}k'
	+
	\delta^{k-1}(\delta+1)^2
	=
	\delta^{k-1}(\delta+1)(\delta+2)$. Since
$$
k(k+\delta-1+k\delta)<(\delta+1)^2,
$$
any integer number $d\geq (r_0+k)\delta+2\delta$ can be written as
$$
d
=
\epsilon
+
(r+k)\delta,
$$
where
$k\leq \epsilon\leq k+\delta-1$ and $r>2\delta^{k-1}k'+\delta^{k-1}k(\epsilon+k\delta)$. For such $d$, since
$$
\lim_{\alpha\rightarrow\infty}
\dfrac{\beta+\alpha\delta^{k-1}k'}{-\widetilde{\beta}-\alpha(\delta^{k-1}k(\epsilon+k\delta)-r)}
=
\dfrac{\alpha \delta^{k-1}k'}{r-\delta^{k-1}k(\epsilon+k\delta)}
<
\dfrac{1}{2},
$$
using Proposition~\ref{base locus bd}, for $\alpha\gg 1$ large enough, there exists some global logarithmic jet differential  $$\mathscr{P}
\in
H^0
\big(
\mathbb{C}\mathbb{P}^n,
E_{k,m}^{GG}T_{\mathbb{C}\mathbb{P}^n}^*(\log D)
\otimes
\mathcal{O}_{\mathbb{C}\mathbb{P}^n}(1)^{-\widetilde{m}}
\big)
$$ satisfying~\eqref{eq_canPfjets} with $m=\beta+\alpha\delta^{k-1}k',\widetilde{m}=-\widetilde{\beta}-\alpha(\delta^{k-1}k(\epsilon+k\delta)-r)$ and $\widetilde{m}>2m$. Hence it remains to giving a lower bound for $(r_0+k)\delta+2\delta$. This could be done by a straightforward computation:
\begin{align*}
(r_0+k)\delta+2\delta
&=
\big(\delta^{k-1}(\delta+1)(\delta+2)+k+2\big)\delta\\
&<
(n+1)^{n+3}(n+2)^{n+3}.
\end{align*}
\end{proof}

\section{Value distribution theory for holomorphic maps from unit disc into projective spaces}
Let $E=\sum_i\alpha_i\,a_i$ be a divisor on the unit disc $\Delta$ where $\alpha_i\geq 0$, $a_i\in\Delta$ and let
$k\in \mathbb{N}\cup\{\infty\}$. For each $0<t<1$, denote by $\Delta_t$ the disk $\{z\in\mathbb{C},|z|<t\}$. Summing the $k$-truncated degrees of the divisor on disks by
\[
n^{[k]}(t,E)
:=
\sum_{a_i\in\Delta_t}
\min
\,
\{k,\alpha_i\}
\eqno
{{\scriptstyle (0\,<\,t\,<\,1)},}
\]
the \textsl{truncated counting function at level} $k$ of $E$ is then defined by taking the logarithmic average
\[
N^{[k]}(r,E)
\,
:=
\,
\int_0^r \frac{n^{[k]}(t, E)}{t}\,\dif\! t
\eqno
{{\scriptstyle (0\,<\,r\,<1)}.}
\]
When $k=\infty$, we write $n(t,E)$, $N(r,E)$ instead of $n^{[\infty]}(t,E)$, $N^{[\infty]}(r,E)$. Let $f\colon\Delta\rightarrow \mathbb{C}\mathbb{P}^n$ be an entire curve having a reduced representation $f=[f_0:\cdots:f_n]$ in the homogeneous coordinates $[z_0:\cdots:z_n]$ of $\mathbb{C}\mathbb{P}^n$. Let $D=\{Q=0\}$ be a divisor in $\mathbb{C}\mathbb{P}^n$ defined by a homogeneous polynomial $Q\in\mathbb{C}[z_0,\dots,z_n]$ of degree $d\geq 1$. If $f(\Delta)\not\subset D$, we define the \textsl{truncated counting function} of $f$ with respect to $D$ as
\[
N_f^{[k]}(r,D)
\,
:=
\,
N^{[k]}\big(r,(Q\circ f)_0\big),
\]
where $(Q\circ f)_0$ denotes the zero divisor of $Q\circ f$.

The \textsl{proximity function} of $f$ for the divisor $D$ is defined as
\[
m_f(r,D)
\,
:=
\,
\int_0^{2\pi}
\log
\frac{\big\Vert f(re^{i\theta})\big\Vert^d\,
	\Vert Q\Vert}{\big|Q(f)(re^{i\theta})\big|}
\,
\frac{\dif\!\theta}{2\pi},
\]
where $\Vert Q\Vert$ is the maximum  absolute value of the coefficients of $Q$ and
\[
\big\Vert f(z)\big\Vert
\,
=
\,
\max
\,
\{|f_0(z)|,\ldots,|f_n(z)|\}.
\]
Since $\big|Q(f)\big|\leq \Vert Q\Vert\cdot\Vert f\Vert^d$, one has $m_f(r,D)\geq 0$.

Lastly, the \textsl{Cartan order function} of $f$ is defined by
\begin{align*}
T_f(r)
\,
:&=
\,
\frac{1}{2\pi}\int_0^{2\pi}
\log
\big\Vert f(re^{i\theta})\big\Vert \dif\!\theta.
\end{align*}

With the above notations, the Nevanlinna theory consists of two fundamental theorems (for  comprehensive presentations, see  \cite{Noguchi-Winkelmann2014,Ru2021}).

\begin{namedthm*}{First Main Theorem}\label{fmt} Let $f\colon\Delta\rightarrow \mathbb{P}^n(\mathbb{C})$ be a holomorphic curve and let $D$ be a hypersurface of degree $d$ in $\mathbb{C}\mathbb{P}^n$ such that $f(\Delta)\not\subset D$. Then for every $r>1$, the following holds
	\[
	m_f(r,D)
	+
	N_f(r,D)
	\,
	=
	\,
	d\,T_f(r)
	+
	O(1),
	\]
	whence
	\begin{equation}
	\label{-fmt-inequality}
	N_f(r,D)
	\,
	\leq
	\,
	d\,T_f(r)+O(1).
	\end{equation}
\end{namedthm*}

On the other side, in the harder part, so-called Second Main Theorem, one tries to bound the order function from above by some  sum of certain counting functions. Such types of results were given in several situations, and most of them were relied on the following key estimate.

 \begin{namedthm*}{Logarithmic Derivative Lemma}
	Let $g$ be a non-constant meromorphic function on the unit disc and let $k\geq 1$ be a positive integer number. Then for any $0<r<1$, the following estimate holds
	\[
	m_{\frac{g^{(k)}}{g}}
	(
	r
	)
	:=
	m_{\frac{g^{(k)}}{g}}
	(
	r,\infty
	)
	=
	O\bigg(\log\dfrac{1}{1-r}\bigg)+O(\log T_g(r))\qquad\parallel,
	\]
where the notation $\parallel$	means that the above estimate holds true for all $0<r<1$ outside a subset $E\subset (0,1)$ with $$\int_E\dfrac{dr}{1-r}<\infty.$$
\end{namedthm*}

\section{An application of the Logarithmic Derivative Lemma}

It is a well-known fact that the growth of the order function of an entire holomorphic curve could be used to determine its rationality. Replacing the source of the curve by the unit disc $\Delta$, one has the following
\begin{defi}
A holomorphic map $f\colon\Delta\rightarrow\mathbb{C}\mathbb{P}^n$ is said to be transcendental if
$$
\limsup_{r\rightarrow 1}\dfrac{T_f(r)}{\log\dfrac{1}{1-r}}=\infty.
$$
\end{defi}

\begin{thm}
	\label{non trancendental of f}
	Let $f\colon\Delta\rightarrow \mathbb{C}\mathbb{P}^n$ be a holomorphic map and $D\subset \mathbb{C}\mathbb{P}^n$ be a generic hypersurface having 
	 large enough degree:
	\[
	d
	\geq
	(n+1)^{n+3}(n+2)^{n+3}.
	\] If $f$ avoids $D$, then it is not transcendental.
\end{thm}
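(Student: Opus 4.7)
I would argue by contradiction: suppose $f$ is transcendental. Then $f$ is non-constant and $f(\Delta)\not\subset D$ (since $f$ avoids $D$ entirely), so Theorem~\ref{the_existence_jet} applied with $k=n+1$ delivers a global logarithmic jet differential
\[
\mathscr{P}\in H^0\bigl(\mathbb{C}\mathbb{P}^n,\,E_{k,m}^{GG}T_{\mathbb{C}\mathbb{P}^n}^*(\log D)\otimes\mathcal{O}_{\mathbb{C}\mathbb{P}^n}(1)^{-\widetilde{m}}\bigr),\qquad \widetilde{m}>2m,
\]
with $F:=\mathscr{P}(j_k(f))\not\equiv 0$. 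The decisive point is that, because $f$ avoids $D$, the logarithmic poles of $\mathscr{P}$ never materialise along $f$, so $F$ is a non-zero \emph{holomorphic} function on $\Delta$. The plan is then to bracket the mean $\tfrac{1}{2\pi}\int_0^{2\pi}\log|F(re^{i\theta})|\,\dif\theta$ between a Nevanlinna-type upper bound (exploiting the $\mathcal{O}(-\widetilde m)$-twist) and a Jensen-type lower bound, to force a contradiction with transcendence.

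For the upper bound, I would expand $\mathscr{P}$ in the local trivialisation \eqref{local expression of log jet, second form}: $F$ becomes a weighted-homogeneous polynomial of degree $m$ in the ordinary derivatives $f_i^{(j)}$ and the logarithmic derivatives $(Q\circ f)^{(j)}/(Q\circ f)$ (where $Q$ is a defining polynomial of $D$), multiplied globally by $\|f\|^{-\widetilde m}$ coming from the twist. Applying the Logarithmic Derivative Lemma term-by-term to each of the finitely many ratios of the form $f_i^{(j)}/f_i$ and $(Q\circ f)^{(j)}/(Q\circ f)$, and integrating $-\widetilde m\log\|f\|$ to produce the order function $-\widetilde m\,T_f(r)$, I expect to reach
\[
\frac{1}{2\pi}\int_0^{2\pi}\log|F(re^{i\theta})|\,\dif\theta\,+\,\widetilde{m}\,T_f(r)\ \leq\ O\!\left(\log\frac{1}{1-r}\right)+O(\log T_f(r))\qquad\parallel,
\]
where $\parallel$ absorbs an exceptional set $E\subset(0,1)$ with $\int_E \dif r/(1-r)<\infty$.

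For the lower bound, since $F\not\equiv 0$ is holomorphic on $\Delta$, after shifting to a base point $z_0$ with $F(z_0)\neq 0$ Jensen's inequality gives $\tfrac{1}{2\pi}\int_0^{2\pi}\log|F(z_0+\rho e^{i\theta})|\,\dif\theta\geq\log|F(z_0)|=O(1)$ as $\rho\to 1^{-}$. Combining the two estimates yields $\widetilde m\,T_f(r)\leq O(\log(1/(1-r)))+O(\log T_f(r))$ outside $E$; since $\int_0^1 \dif r/(1-r)=\infty$ but $\int_E \dif r/(1-r)<\infty$, one can choose $r_j\to 1$ with $r_j\notin E$, along which either $T_f$ is bounded (contradicting transcendence immediately) or $\log T_f(r_j)=o(T_f(r_j))$ and the inequality forces $T_f(r_j)=O(\log(1/(1-r_j)))$, again contradicting transcendence.

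The principal technical obstacle is the upper estimate: one must unwind the weighted-homogeneous local expansion of $\mathscr{P}$ in the variables supplied by \eqref{local expression of log jet, second form}, identify each factor as a meromorphic function on $\Delta$ built from $f$ and $Q\circ f$, apply the Logarithmic Derivative Lemma to each logarithmic derivative separately, and then patch the finitely many local estimates into a single global inequality using the compactness of $\mathbb{C}\mathbb{P}^n$ and the projective invariance of $T_f$. This is the analytic heart of the argument, and it is precisely the use for which Theorem~\ref{the_existence_jet} (with $\widetilde m>2m$) is engineered.
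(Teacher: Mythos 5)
Your proposal follows essentially the same route as the paper: the paper's own proof is a two-line appeal to the jet differential of Theorem~\ref{the_existence_jet}, the arguments of \cite{HVX2019} and the Logarithmic Derivative Lemma to obtain $T_f(r)\le N^{[1]}_f(r,D)+O\big(\log\tfrac{1}{1-r}\big)+O(\log T_f(r))\ \parallel$, which is exactly the tautological-inequality argument you spell out (LDL upper bound plus Jensen lower bound, with the term $\widetilde m\,T_f(r)$ produced by the $\mathcal{O}(-\widetilde m)$ twist and the counting term absent because $f$ omits $D$). The only point worth tightening is your last step: a bound $T_f(r_j)=O\big(\log\tfrac{1}{1-r_j}\big)$ along a sequence avoiding the exceptional set $E$ does not by itself contradict $\limsup_{r\to1} T_f(r)/\log\tfrac{1}{1-r}=\infty$, so you should transfer the estimate to all $r$ near $1$ using the monotonicity of $T_f$ together with the finiteness of $\int_E\frac{\dif r}{1-r}$.
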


\begin{proof}
	Employing the logarithmic jet differentials supplied by Theorem~\ref{construction-jet-differential}, following the arguments as in \cite{HVX2019} and using the Logarithmic Derivative Lemma for meromorphic functions on unit disc, one gets
	$$
	T_f(r)
	\leq
	N_f^{[1]}(r,D)
	+
	O\bigg(\log\dfrac{1}{1-r}\bigg)
	+O(\log T_f(r))
	=
	O\bigg(\dfrac{1}{1-r}\bigg)
	+O(\log T_f(r))\quad\parallel,
	$$
	whence concludes the proof.
\end{proof}

We will also need the following results due to Fujimoto \cite{Fujimoto83}.

\begin{pro} (\cite[Pro. 2.5]{Fujimoto83})
	Let $\varphi$ be a nowhere zero holomorphic function on $\Delta$ which
	is not transcendental. Then, for each positive integer number $\lambda$, the following estimate holds
	$$
	\int_{0}^{2\pi}
	\bigg|
	\dfrac{
	d^{\lambda-1}}{d z^{\lambda-1}}
	\bigg(
	\dfrac{\varphi'}{\varphi}
	\bigg)(re^{i\theta})
	\bigg|
	d\theta
	\leq
	\dfrac{\Const}{(1-r)^{\lambda}}\log\dfrac{1}{1-r}\eqno\scriptstyle{(0\,<\,r\,<1)}.
	$$
\end{pro}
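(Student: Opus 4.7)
The plan is to exploit the hypothesis that $\varphi$ is nowhere zero on the simply-connected disc $\Delta$, so it admits a holomorphic logarithm $\psi = \log\varphi$. Then $\varphi'/\varphi = \psi'$, and the integrand of interest is simply $\psi^{(\lambda)}$. The strategy is therefore to derive an $L^1$-estimate on circles of radius $r$ for $\psi^{(\lambda)}$, using $\mathrm{Re}\,\psi = \log|\varphi|$ together with the non-transcendental growth hypothesis on $\varphi$.

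My first step will be to recover $\psi$ from its boundary real part via the Schwarz--Herglotz formula, on a disc of radius $s$ with $r < s < 1$:
$$
\psi(z) \;=\; \frac{1}{2\pi}\int_0^{2\pi}\log|\varphi(se^{i\theta})|\cdot\frac{se^{i\theta}+z}{se^{i\theta}-z}\,d\theta\; +\; i\,\mathrm{Im}\,\psi(0).
$$
Differentiating $\lambda$ times under the integral sign produces the explicit formula
$$
\psi^{(\lambda)}(z) \;=\; \frac{\lambda!}{\pi}\int_0^{2\pi}\log|\varphi(se^{i\theta})|\cdot\frac{se^{i\theta}}{(se^{i\theta}-z)^{\lambda+1}}\,d\theta.
$$
Taking moduli, integrating over $|z|=r$, and exchanging the order of integration by Fubini splits the problem into a kernel estimate and a boundary estimate.

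For the kernel, the inner integral $\int_0^{2\pi} d\phi/|se^{i\theta}-re^{i\phi}|^{\lambda+1}$ is rotation-invariant in $\theta$, and I would bound it by $O((s-r)^{-\lambda})$ by localising near the peak at $\phi = 0$ and substituting $u = \phi\sqrt{sr}/(s-r)$. For the boundary integral, the nowhere-zero condition on $\varphi$ together with Jensen's formula gives $\int_0^{2\pi}\log|\varphi(se^{i\theta})|\,d\theta = 2\pi\log|\varphi(0)|$; splitting $\log|\varphi|$ into its positive and negative parts and invoking $T_{1/\varphi}(s) = T_\varphi(s)+O(1)$ then yields
$$
\int_0^{2\pi}\bigl|\log|\varphi(se^{i\theta})|\bigr|\,d\theta \;=\; O(T_\varphi(s))+O(1).
$$
The non-transcendence hypothesis supplies the final input $T_\varphi(s) = O\!\left(\log\tfrac{1}{1-s}\right)$.

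The combination is then completed by choosing $s = (1+r)/2$, so that $s - r = 1 - s = (1-r)/2$, at which point the two estimates multiply out to the desired bound $\Const\cdot(1-r)^{-\lambda}\log\frac{1}{1-r}$. I do not expect any conceptual obstacle: the argument is a disc-version of the classical proof of the logarithmic derivative lemma, with the sharper rate $\log\frac{1}{1-r}$ inherited cleanly from non-transcendence. The only technical step requiring care is the kernel estimate, as one must extract exactly the right power of $(s-r)^{-1}$; this is where the power $\lambda$ in the final bound is fixed.
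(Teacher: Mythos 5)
The paper offers no proof of this proposition---it is quoted verbatim from Fujimoto---and your argument is, up to presentation, the standard one underlying Fujimoto's: differentiate the Schwarz--Poisson representation of $\log\varphi$ (which exists since $\varphi$ is nowhere zero on the simply connected disc) $\lambda$ times, bound the resulting kernel integral over $|z|=r$ against the circle $|w|=s=(1+r)/2$ by $O\big((1-r)^{-\lambda}\big)$, and control $\int_0^{2\pi}\bigl|\log|\varphi(se^{i\theta})|\bigr|\,d\theta$ by $O(T_\varphi(s))+O(1)=O\bigl(\log\frac{1}{1-r}\bigr)$ via Jensen's formula (no zeros, no poles) and the non-transcendence hypothesis. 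Every step checks out; the only cosmetic caveat is that the stated inequality can hold with a uniform constant only for $r$ bounded away from $0$, since the right-hand side tends to $0$ as $r\to0$ while the left-hand side need not---which is how such estimates are always intended.
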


\begin{cor}(\cite[Lem. 3.4]{Fujimoto83})
	\label{application of logarithmic derivative lemma}
Let $\varphi_1,\dots,\varphi_n$ be  nowhere zero holomorphic functions on $\Delta$ which
are not transcendental. Then, for any $n$-tuple of positive integer numbers $(\lambda_1,\dots,\lambda_n)$ and for any positive real number $t$ with $tn<1$, the following estimate holds
$$
\int_{0}^{2\pi}
\bigg|
\prod_{j=1}^n
\bigg(
\dfrac{\varphi_j'}{\varphi_j}
\bigg)^{(\lambda_j-1)}(re^{i\theta})
\bigg|^t
d\theta
\leq
\dfrac{\Const}{(1-r)^{s}}
\bigg(
\log\dfrac{1}{1-r}
\bigg)^{s}\eqno\scriptstyle{(0\,<\,r\,<1)},
$$
where $s=t(\sum_{j=1}^n\lambda_j)$.
\end{cor}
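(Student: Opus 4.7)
The plan is to reduce this multi-function estimate to the preceding single-function proposition via a two-step averaging argument; the hypothesis $tn<1$ is exactly what lets the separation go through.

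Writing $F_{j}(z):=\bigl(\varphi_{j}'/\varphi_{j}\bigr)^{(\lambda_{j}-1)}(z)$ for brevity, the first step is to decouple the product via H\"older's inequality with the $n$ equal conjugate exponents $n,n,\dots,n$, which yields
$$
\int_{0}^{2\pi}\prod_{j=1}^{n}\bigl|F_{j}(re^{i\theta})\bigr|^{t}\,d\theta
\;\leq\;
\prod_{j=1}^{n}\bigg(\int_{0}^{2\pi}\bigl|F_{j}(re^{i\theta})\bigr|^{tn}\,d\theta\bigg)^{1/n}.
$$
The second step is to exploit that, since $0<tn<1$, the map $x\mapsto x^{tn}$ is concave on $[0,\infty)$, so Jensen's inequality applied to the normalized measure $d\theta/(2\pi)$ gives
$$
\int_{0}^{2\pi}|F_{j}|^{tn}\,d\theta
\;\leq\;
(2\pi)^{1-tn}\bigg(\int_{0}^{2\pi}|F_{j}|\,d\theta\bigg)^{tn}.
$$
Each of the single-function $L^{1}$ integrals on the right is then controlled directly by the preceding proposition applied to $\varphi_{j}$ (which is nowhere zero, holomorphic, and not transcendental by hypothesis), yielding
$$
\int_{0}^{2\pi}\bigl|F_{j}(re^{i\theta})\bigr|\,d\theta
\;\leq\;
\dfrac{\Const}{(1-r)^{\lambda_{j}}}\log\dfrac{1}{1-r}.
$$

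Substituting these bounds into the previous two displays and multiplying over $j$, the $(1-r)$-exponents add to $\sum_{j=1}^{n}t\lambda_{j}=s$, while the logarithmic factors accumulate to $\bigl(\log\frac{1}{1-r}\bigr)^{tn}$. Since each $\lambda_{j}\geq 1$ forces $tn\leq t\sum_{j}\lambda_{j}=s$, and $\log\frac{1}{1-r}\geq 1$ for $r$ close enough to $1$, this log factor is in turn majorized by $\bigl(\log\frac{1}{1-r}\bigr)^{s}$, delivering the stated estimate. There is essentially no substantive obstacle beyond choosing the right H\"older exponents: the constraint $tn<1$ is what makes H\"older with uniform exponent $n$ and Jensen on $x^{tn}$ simultaneously available, and without it the reduction to the preceding proposition breaks down.
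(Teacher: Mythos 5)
Your argument is correct: H\"older with the $n$ equal exponents decouples the product, concavity of $x\mapsto x^{tn}$ (this is where $tn<1$ is genuinely used) reduces each factor to an $L^1$ bound, and the preceding proposition supplies that bound, with the exponents of $(1-r)^{-1}$ summing to $s$ and the surplus logarithmic power $tn\le s$ absorbed since $\log\frac{1}{1-r}\ge 1$ near $r=1$. The paper gives no proof of this corollary --- it is quoted verbatim from Fujimoto's Lemma 3.4 --- and your reduction is exactly the standard argument behind it; the only (harmless) caveat is that the final majorization of $(\log\frac{1}{1-r})^{tn}$ by $(\log\frac{1}{1-r})^{s}$ is only valid for $r$ bounded away from $0$, a restriction already implicit in the statement itself, whose right-hand side degenerates as $r\to 0$.
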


\section{Proof of the Main result}
\begin{pro}
	\label{estimate jet and norm f}
	Let $D\subset\mathbb{C}\mathbb{P}^{n}$ be a generic smooth hypersurface of degree $d$ and let $f\colon\Delta\rightarrow\mathbb{C}\mathbb{P}^n\setminus D$ be a non-degenerate holomorphic curve. Suppose that there exists a global logarithmic jet differential
	\[
	\mathscr{P}
	\in
	H^0
	\big(
	\mathbb{P}^n(\mathbb{C}),
	E_{n,m}^{GG}T_{\mathbb{C}\mathbb{P}^n}^*(\log D)
	\otimes
	\mathcal{O}_{\mathbb{C}\mathbb{P}^n}(1)^{-\widetilde{m}}
	\big)
	\]
	such that
	\begin{align}
	\label{condition non vanishing jet}
	\mathscr{P}\big(j_n(f)
	\big)
	\not\equiv
	0.
	\end{align}
	Then, there exists a positive constant $K$ such that
	$$
	\int_{0}^{2\pi}\big|\mathscr{P}\big(j_n(f)
	\big)(re^{i\theta})\big|^{\frac{2}{\widetilde{m}}}\|f(re^{i\theta})\|^2d\theta
	\leq
	\dfrac{K}{(1-r)^{\frac{2m}{\widetilde{m}}}}
	\bigg(
	\log
	\dfrac{1}{1-r}
	\bigg)^{\frac{2m}{\widetilde{m}}}
	\eqno
	\scriptstyle{(0<r<1)}.
$$
\end{pro}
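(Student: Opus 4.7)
The plan is to regard $|\mathscr{P}(j_n f)|\cdot\|f\|^{\widetilde{m}}$ as an invariantly-defined non-negative function on $\Delta$, express it locally as a weighted polynomial of degree $m$ in logarithmic derivatives of nowhere-zero non-transcendental holomorphic functions, and then apply Corollary~\ref{application of logarithmic derivative lemma} with the exponent $t=2/\widetilde{m}$, whose admissibility condition $t\cdot(\text{factors})<1$ becomes exactly the hypothesis $\widetilde{m}>2m$ supplied by Theorem~\ref{construction-jet-differential}.

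First I cover $\mathbb{C}\mathbb{P}^n$ by the standard charts $U_i=\{z_i\neq 0\}$ and, near points of $D$, refine to local analytic charts in which the smooth hypersurface $D$ is cut out by $\{z_1=0\}$. Using the trivialization~\eqref{local expression of log jet, second form}, on each such chart $\mathscr{P}\cdot z_i^{\widetilde{m}}$ is a weighted polynomial of degree $m$ in the symbols $\tfrac{d^jz_1}{z_1}$ and $d^jw_s$ ($s\geq 2$). Pulling back via $f$ and rewriting each $d^jw_s(f)$, via Fa\`a di Bruno, as $w_s(f)$ times an isobaric polynomial of weight $j$ in the logarithmic derivatives of $w_s\circ f$, and each pullback of $\tfrac{d^jz_1}{z_1}$ as an isobaric polynomial in the logarithmic derivatives of $Q(\tilde f)/f_i^d$ (where $Q$ is the defining polynomial of $D$), I obtain, on each chart, a decomposition
\[
|\mathscr{P}(j_n f)|\cdot\|f\|^{\widetilde{m}}=\sum_{\text{finite}}\prod_{\nu}\left|\bigl(\tfrac{\varphi_\nu'}{\varphi_\nu}\bigr)^{(\lambda_\nu-1)}\right|\cdot(\text{uniformly bounded factor}),
\]
in which each monomial involves at most $m$ factors $\varphi_\nu$ and the jet orders satisfy $\sum_\nu\lambda_\nu\leq m$.

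For Corollary~\ref{application of logarithmic derivative lemma} to apply, every $\varphi_\nu$ must be nowhere zero and non-transcendental. Nowhere-vanishing of $Q(\tilde f)$ is immediate from $f(\Delta)\cap D=\varnothing$; the remaining $\varphi_\nu$'s, coming from local coordinate ratios, are arranged to be zero-free on the relevant subsets of $\Delta$ by choosing a cover of $\mathbb{C}\mathbb{P}^n$ in sufficiently general position and patching via a partition of unity, in the style of \cite{Fujimoto83,HVX2019} (here non-degeneracy of $f$ rules out identical vanishing along any coordinate hyperplane). Non-transcendentality of every $\varphi_\nu$ follows at once from Theorem~\ref{non trancendental of f}: since $f$ avoids $D$, one has $T_f(r)=O(\log\tfrac{1}{1-r})$, and this order bound descends to any polynomial function of the components of $f$.

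With the local form established, I apply Corollary~\ref{application of logarithmic derivative lemma} monomial by monomial with $t=2/\widetilde{m}$. The weighted-degree constraint $|\alpha_1|+2|\alpha_2|+\cdots+k|\alpha_k|=m$ caps the number of factors in each monomial by $m$, so the admissibility hypothesis $t\cdot(\text{number of factors})<1$ reduces exactly to $2m/\widetilde{m}<1$, guaranteed by $\widetilde{m}>2m$. The corollary then bounds each monomial by $\mathrm{const}\cdot(1-r)^{-s}(\log\tfrac{1}{1-r})^{s}$ with $s=t\sum_\nu\lambda_\nu\leq 2m/\widetilde{m}$, and summing finitely many such estimates yields the claimed inequality. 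The main obstacle in executing this plan is step two: organizing the invariantly-defined global quantity $|\mathscr{P}(j_n f)|\cdot\|f\|^{\widetilde{m}}$ on $\Delta$ into a \emph{finite} sum of polynomials in logarithmic derivatives of \emph{nowhere-vanishing} non-transcendental functions, notwithstanding that individual components $f_i$ may have zeros. This requires a careful cover-and-patch argument of the type carried out in \cite{Fujimoto83,HVX2019}.
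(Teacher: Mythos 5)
Your proposal follows essentially the same route as the paper's proof: both represent $\mathscr{P}s^{\widetilde m}$ locally as an isobaric polynomial of weight $m$ in logarithmic derivatives of rational functions pulled back by $f$, invoke Theorem~\ref{non trancendental of f} to get non-transcendentality of those pullbacks, and apply Corollary~\ref{application of logarithmic derivative lemma} with $t=2/\widetilde m$, the admissibility condition $tm<1$ being exactly $\widetilde m>2m$. The only difference is one of emphasis: you are somewhat more explicit than the paper about the nowhere-vanishing hypothesis required by Fujimoto's lemma, a point the paper's own proof leaves implicit.
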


\begin{proof}
Let $s$ be the canonical section of the ample line bundle $\mathcal{E}:=\mathcal{O}_{\mathbb{C}\mathbb{P}^n}(1)$. Since $\mathscr{P}$ vanishes on $\mathcal{E}$  with vanishing order $\widetilde{m}$, in any local chat $U_{\alpha}$ of $\mathbb{C}\mathbb{P}^n$, one can represent $\mathscr{P}s^{\widetilde{m}}$ as an isobaric polynomial $\mathscr{P}_s^{\alpha}$ of weight $m$ of variables 
$$
\dfrac{d^{\lambda}u_{j,\lambda}^{\alpha}}{u_{j,\lambda}^{\alpha}}
\eqno\scriptstyle{(1\,\leq\, \lambda\,\leq\, k,\, 1\,\leq\,j\,\leq\,n)},
$$
with local holomorphic coefficients,
where $u_{j,\lambda}$ are rational functions on $\mathbb{C}\mathbb{P}^n$. Consequently, we get that
\begin{align*}
|\mathscr{P}(j_k(f))|
\cdot
\|f\|^{\widetilde{m}}&
\leq
\sum_{\alpha}
\bigg|
\mathscr{P}_s^{\alpha}
\bigg(
\dfrac{d^{\lambda}(u_{j,\lambda}^{\alpha}\circ f)}{u_{j,\lambda}^{\alpha}\circ f}
\bigg)
\bigg|.
\end{align*}

Since $0<\frac{2}{\widetilde{m}}<1$, using the elementary inequality
\[
(x_1+\dots+x_r)^{\frac{2}{\widetilde{m}}}
<
x_1^{\frac{2}{\widetilde{m}}}
+
\dots+
x_r^{\frac{2}{\widetilde{m}}}
\eqno
\scriptstyle{(x_i>0)},
\]
the above estimate yields
\[
\big|\mathscr{P}\big(j_n(f)
\big)(re^{i\theta})\big|^{\frac{2}{\widetilde{m}}}\|f(re^{i\theta})\|^2
<
\sum_{\alpha}
\bigg|
\mathscr{P}_s^{\alpha}
\bigg(
\dfrac{d^{\lambda}(u_{j,\lambda}^{\alpha}\circ f)}{u_{j,\lambda}^{\alpha}\circ f}
\bigg)
\bigg|^{\frac{2}{\widetilde{m}}}.
\]
Hence it suffices to prove
\[
\int_{0}^{2\pi}\bigg|
\mathscr{P}_s^{\frac{2}{\widetilde{m}}}
\bigg(
\dfrac{d^{\lambda}(u_{j,\lambda}^{\alpha}\circ f)}{u_{j,\lambda}^{\alpha}\circ f}
\bigg)
\bigg|^{\frac{2m}{\widetilde{m}}}
d\theta
\leq
\dfrac{\Const}{(1-r)^{\frac{2m}{\widetilde{m}}}}
\bigg(
\log
\dfrac{1}{1-r}
\bigg)^{\frac{2m}{\widetilde{m}}}
\eqno
\scriptstyle{(0<r<1)}.
\]
By assumption,  $f$ avoids $D$,  hence it is not transcendental by Theorem~\ref{non trancendental of f}. Since each function $u_{j,\lambda}^{\alpha}$ is rational, it follows that $u_{j,\lambda}^{\alpha}\circ f$ is also not transcendental. Now, observing that each term
$$
\dfrac{d^{\lambda}(u_{j,\lambda}^{\alpha}\circ f)}{u_{j,\lambda}^{\alpha}\circ f}
$$
can be represented as a polynomial
$\mathscr{P}_{j,\lambda}^{\alpha}$ of variables 
$$
\dfrac{(u_{j,\lambda}^{\alpha}\circ f)'}{u_{j,\lambda}^{\alpha}\circ f},\dots,
\bigg(
\dfrac{(u_{j,\lambda}^{\alpha}\circ f)'}{u_{j,\lambda}^{\alpha}\circ f}
\bigg)^{\lambda-1},$$
which is isobaric of weight $\lambda$,
using Corollary~\ref{application of logarithmic derivative lemma}, one immediately gets the desired result.

\end{proof}
We will also need the following result of Yau \cite{Yau76} in the sequence.
\begin{thm}[\cite{Yau76}]
	\label{Yau}
Let $M$ be a complete Riemann manifold equipped with a volume form $d\sigma$. Let $h$ be a non-negative and non-constant smooth function on $M$ such that $\Delta \log h=0$ almost everywhere.	Then $\int_Mh^pd\sigma=\infty$ for any $p>0$.
\end{thm}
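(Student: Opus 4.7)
I would argue by contradiction: suppose $\int_M h^p\, d\sigma < \infty$ for some fixed $p > 0$, aiming to show that $h$ must be constant. The heart of the plan is to reduce the statement to an $L^2$ Caccioppoli-type argument by passing to $u := h^{p/2}$, which inherits from $h$ the relation $\Delta \log u = (p/2)\,\Delta\log h = 0$ almost everywhere. Writing $\nabla u = u\,\nabla \log u$ and applying the divergence operator gives the two pointwise identities
\[
\Delta u \;=\; u\,|\nabla \log u|^2 + u\,\Delta\log u \;=\; u\,|\nabla \log u|^2 \;\geq\; 0, \qquad u\,\Delta u \;=\; |\nabla u|^2.
\]
Hence $u$ is a non-negative subharmonic function on $M$ satisfying $\int_M u^2\, d\sigma = \int_M h^p\, d\sigma < \infty$.

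Next I would exploit completeness of $M$ through a Caccioppoli cutoff. Choose $\phi_R \in C^\infty_c(M)$ with $0 \le \phi_R \le 1$, $\phi_R \equiv 1$ on the geodesic ball $B(R)$, $\phi_R \equiv 0$ outside $B(2R)$, and $|\nabla\phi_R| \le C/R$; completeness guarantees such cutoffs exist and exhaust $M$ as $R \to \infty$. Multiplying $u\,\Delta u = |\nabla u|^2$ by $\phi_R^2$, integrating over $M$, and using the divergence theorem (boundary terms vanish since $\phi_R$ is compactly supported) yields
\[
\int_M \phi_R^2\, |\nabla u|^2\, d\sigma \;=\; -\int_M 2\phi_R\, u\, \nabla \phi_R \cdot \nabla u\, d\sigma \;-\; \int_M \phi_R^2\, |\nabla u|^2\, d\sigma.
\]
Rearranging and applying Cauchy--Schwarz to the mixed term on the right leads to the Caccioppoli estimate
\[
\int_M \phi_R^2\, |\nabla u|^2\, d\sigma \;\leq\; \int_M u^2\, |\nabla\phi_R|^2\, d\sigma \;\leq\; \frac{C^2}{R^2}\int_M h^p\, d\sigma.
\]
Letting $R \to \infty$, the right-hand side tends to $0$, and monotone convergence forces $\int_M |\nabla u|^2\, d\sigma = 0$. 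Thus $u$ is constant on $M$, hence so is $h$, contradicting the hypothesis.

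\textbf{Main obstacle.} The delicate point is the regularity of $u = h^{p/2}$ on the vanishing locus $\{h = 0\}$ when $p < 2$, together with the precise meaning of ``$\Delta \log h = 0$ almost everywhere'' near that locus; without care, the identities used for $u\Delta u$ and the integration by parts become suspect exactly where $u$ fails to be smooth. I would resolve this by a standard approximation scheme --- either replace $u$ by $u_\varepsilon := (u^2 + \varepsilon)^{1/2}$ in the integration by parts, or first restrict to the open set $\{h > \varepsilon\}$, use Sard's theorem to select regular level sets for the boundary, control the resulting boundary terms using the smoothness of $h$, and then let $\varepsilon \downarrow 0$. Apart from this technicality, the only non-trivial geometric input is completeness, which enters solely through the existence of the exhausting Lipschitz cutoffs $\phi_R$.
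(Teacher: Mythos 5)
The paper does not prove this statement; it is quoted verbatim from Yau's 1976 paper, so there is no internal proof to compare against. Your argument is essentially Yau's own: the substitution $u=h^{p/2}$ turns the log-harmonicity into the exact identity $u\,\Delta u=|\nabla u|^2$ (which is what lets arbitrary $p>0$ work, rather than only $p>1$ as for general subharmonic functions), and the Caccioppoli cutoff estimate $\int\phi_R^2|\nabla u|^2\,d\sigma\le\int u^2|\nabla\phi_R|^2\,d\sigma$ combined with $\int_M u^2\,d\sigma<\infty$ forces $u$ to be constant. The computations check out, and you correctly isolate the one genuine delicacy, namely the regularity of $h^{p/2}$ and the possible singular part of $\Delta\log h$ on $\{h=0\}$; note that the cleaner regularization is $u_\varepsilon=(h+\varepsilon)^{p/2}$ (rather than $(u^2+\varepsilon)^{1/2}$, since $h^p$ itself may fail to be $C^1$ at zeros of $h$), for which a direct computation gives $\Delta\log u_\varepsilon=\tfrac{p}{2}\,\varepsilon|\nabla h|^2/\bigl(h(h+\varepsilon)^2\bigr)\ge0$ on $\{h>0\}$, so the inequality needed for the Caccioppoli step survives the limit $\varepsilon\downarrow0$. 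With that technicality handled as you indicate, the proof is correct.
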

Now we enter the details of the proof of the Main Theorem. Let $f$ be the conjugate of $G$, which is a holomorphic map. It suffices to prove that $f$ is constant. Suppose on the contrary that this is not the case. Let $\pi\colon\widetilde{M}\rightarrow M$ be the  universal covering of $M$. Then $\widetilde{M}$ is also considered as a minimal surface in $\mathbb{R}^n$. Hence without lost of generality, we may assume $M=\widetilde{M}$. Since there is no compact minimal surface in $\mathbb{R}^n$, it follows that $M$ is biholomorphic to either $\mathbb{C}$ or $\Delta$. Thus we may assume  $M=\mathbb{C}$ or $M=\Delta$. The first case was excluded by recent work towards Kobayashi's conjecture (cf.\cite{Brotbek-Deng2019}). Hence it suffices to work in the case where $M=\Delta$. The area form of the metric on $M$ induced from the flat metric on $\mathbb{R}^n$ is given by 
$$
d\sigma=
2\|f\|^2du\wedge dv.
$$
Let $\mathscr{P}$ be a global logarithmic jet differential supplied by Theorem~\ref{construction-jet-differential}. Then it is clear that $h=|\mathscr{P}(j_k(f)|\not\equiv 0$ and $\Delta\log h=0$ for any $z$ out side the the zero set of $h$. Since $\Delta$ is complete, simply connected and of non-positive curvature, it has the infinite area with respect to the metric induced from $\mathbb{R}^n$. Using Theorem~\ref*{Yau}, on obtains that
\begin{equation}
\label{first estimate via Yau theorem}
\int_{\Delta}
h^{\frac{2}{\widetilde{m}}}d\sigma=
\infty.
\end{equation}
On the other hand, using Proposition~\ref{estimate jet and norm f}, one has
\begin{align*}
\int_{\Delta}
h^{\frac{2}{\widetilde{m}}}d\sigma
&=
2\int_{\Delta}
h^{\frac{2}{\widetilde{m}}}\|f\|^2dudv\\
&=
2
\int_{0}^1rdr
\bigg(
\int_{0}^{2\pi}h(re^{i\theta})^{\frac{2}{\widetilde{m}}}\|f(re^{i\theta})\|^2d\theta
\bigg)\\
&\leq
K
\int_{0}^1
\dfrac{r}{(1-r)^{\frac{2m}{\widetilde{m}}}}
\bigg(
\log
\dfrac{1}{1-r}
\bigg)^{\frac{2m}{\widetilde{m}}}
dr.
\end{align*}
The last integral in the above estimate is finite since $2m<\widetilde{m}$. This contradicts \eqref{first estimate via Yau theorem}. Therefore, the map $f$ must be constant, whence concludes the proof of the Main Theorem.

\section{Some discussions}

Theorem~\ref{first high dim result of Fuj} can be recovered via the above jet method. Indeed, according to Siu \cite{Siu2015}, the Wronskian can be employed to build a suitable logarithmic jet differentials. Precisely, let us consider the inhomogeneous coordinates $x_1,x_2,\dots,x_n$ of $\mathbb{C}\mathbb{P}^n$. Let $\{H_i\}_{1\leq i\leq q}$ be the family of hyperplanes in general position in $\mathbb{C}\mathbb{P}^n$. For each $1\leq i\leq q$, denote by $F_i$ the linear form of variables $x_1,\dots,x_n$ defining the hyperplane $H_i$. Put
$$
\omega
=
\dfrac{\Wron(dx_1,\dots,dx_n)}{F_1\dots F_q},
$$
where $\Wron$ denotes the Wronskian. The point is that by the assumption of general position, at any point $x=(x_1,\dots,x_n)$, there exists a set $I=\{i_1,\dots,i_{n}\}$ having cardinality $n$ such that $F_j$ are nowhere zero in a neighborhood $U$ of $x$ for all $j\not\in I$. Locally on $U$, one can write $\omega$ as

$$
\omega
=
\Const
\dfrac{\Wron(d\,\log F_{i_1}(x),\dots,d\,\log F_{i_n}(x))}{\Pi_{j\not\in I} F_j(x)},
$$
and hence, $\omega$ gives rise to a logarithmic jet differentials along the divisor $\sum_{i=1}^{q}H_i$. The denominator $F_1\dots F_q$ in $\omega$ gives the vanishing order $q$ at the infinity
hyperplane, hence direct computation yields immediately that $\omega$ is of weight $m=\frac{n(n+1)}{2}$ and  vanishes on the infinity hyperplane with the vanishing order $\widetilde{m}=q-(n+1)$.

Finally, in view of the result of Fujimoto-Ru, one can expect that the optimal degree bound in the statement of our Main Theorem should be $\dfrac{n(n+1)}{2}$.
\begin{namedthm*}{Conjecture}
Let $M$ be a non-flat complete minimal surface in $\mathbb{R}^n$ and let $G\colon M\rightarrow \mathbb{C}\mathbb{P}^{n-1}$ be its Gauss map. Then $G$ could avoid a generic hypersurface $D\subset \mathbb{C}\mathbb{P}^{n-1}$ of  degree at most
\[
d
=
\dfrac{n(n+1)}{2}.
\] 
\end{namedthm*}

\begin{center}
	\bibliographystyle{plain}
	
\end{center}
\address
\end{document}